\newtheorem{theorem}{Theorem}[section]
\newtheorem{lemma}[theorem]{Lemma}
\newtheorem{corollary}[theorem]{Corollary}
\newtheorem{proposition}[theorem]{Proposition}
\theoremstyle{definition}
\newtheorem{definition}[theorem]{Definition}
\theoremstyle{remark}
\newtheorem{remark}[theorem]{Remark}
\numberwithin{equation}{section}
\newcommand{\ahg}{\hat{\mathsf{A}}}
\newcommand{\End}[1]{\operatorname{End}(#1)}
\newcommand{\aset}[1]{\{#1\}}
\newcommand{\rK}{\mathrm{K}}
\newcommand{\rT}{\mathrm{T}}
\newcommand{\bS}{\mathbb{S}}
\newcommand{\bC}{\mathbb{C}}
\newcommand{\bQ}{\mathbb{Q}}
\newcommand{\bZ}{\mathbb{Z}}
\newcommand{\cC}{\mathcal{C}}
\newcommand{\cV}{\mathcal{V}}
\newcommand{\bfL}{\mathbf{L}}
\DeclareMathOperator{\kcw}{K-cw_2}
\DeclareMathOperator{\acw}{\Hat{A}-cw_2}
\DeclareMathOperator{\opc}{c}
\DeclareMathOperator{\opR}{R}
\DeclareMathOperator{\ch}{ch}
\DeclareMathOperator{\id}{id}
\begin{document}

\title[A relation between two cowaists]{On a relation between the $\mathrm{K}$-cowaist \\ and the $\hat{\mathsf{A}}$-cowaist}

\author[X. Wang]{Xiangsheng Wang}
\address{School of Mathematics, Shandong University, Jinan, Shandong 250100, China}
\email{xiangsheng@sdu.edu.cn}
\thanks{The author is partially supported by NSFC Grant No.\ 12101361.}

\subjclass[2020]{Primary 53C23; Secondary 57R20}
\keywords{Characteristic class, scalar curvature}

\date{\today}

\dedicatory{}

\begin{abstract}
  The $\rK$-cowaist $\kcw(M)$ and the $\ahg$-cowaist $\acw(M)$ are two interesting invariants on a manifold $M$, which are closely related to the existence of the positive scalar curvature metric on $M$. In this note, we give a detailed proof of the following inequality due to Gromov: $\kcw(M) \le c \acw(M)$, where $c$ is a dimensional constant.
\end{abstract}

\maketitle
\section{Two invariants associated with the positive scalar curvature}
The famous Lichnerowicz theorem asserts that for a closed spin manifold $M$ if the $\ahg$-genus of $M$ does not vanish, $M$ cannot carry a Riemannian metric $g$ with the scalar curvature $\kappa_g > 0$.
An interesting generalization of this theorem is to find a ``quantitative'' version of it.
Namely, under what conditions, can we obtain an upper bound for $\inf_M \kappa_g$ holding for any Riemannian metric $g$ on $M$?
To investigate this problem, Gromov~\cite{Gromov_1996aa} formulates an invariant called $\rK$-cowaist.\footnote{In~\cite{Gromov_1996aa}, $\rK$-cowaist was called $\rK$-area. But recently, Gromov~\cite{Gromov_2019fv} suggests that $\mathrm{K}$-cowaist should be a more proper name for this concept.}
Let us recall the definition of this invariant.

Let $M$ be a closed connected oriented smooth Riemannian manifold of even dimension.
Let $E\to M$ be a Hermitian vector bundle with a Hermitian connection $\nabla^E$ and $\opR^E$ be the curvature of $\nabla^E$.
Note that for any $x\in M$ and $\alpha,\beta \in \rT_x M$, $\opR^E(\alpha \wedge \beta) \in \End{E_x}$.
We define a norm on $\opR^E$ in the following way,
\begin{equation*}
  \|\opR^{E}\| = \sup_{x\in M} \sup_{\substack{\alpha,\beta\in \rT_x{M} \\ \alpha \perp \beta,\ |\alpha\wedge \beta|=1}}|\opR^{E}(\alpha\wedge \beta)|.
\end{equation*}
We are interested in the following class of unitary vector bundles over $M$,
\begin{equation}
  \label{eq:cn}
  \exists\; i_1,\cdots,i_{l} \in \bZ_{> 0} \text{ such that } \int_M \prod_{k=1}^l\opc_{i_k}(E) \neq 0,
\end{equation}
where $\opc_{i_k}(E)$ is the $i_k$-th Chern class of $E$.
The $\rK$-cowaist of $M$ is defined to be
\begin{equation*}
    \kcw(M)=\sup\{\|\opR^E\|^{-1}| E \text{ satisfies (\ref{eq:cn})}\}.
\end{equation*}
In Gromov's words, the condition on the Chern numbers of $E$ ensures that $E$ is ``homologically non-trivial''.
In view of Lichnerowicz's theorem, it is also natural to replace the condition (\ref{eq:cn}) of $\rK$-cowaist with the following condition,
\begin{equation}
  \label{eq:ahg}
  \int_M \ahg(M)\ch(E) \neq 0.
\end{equation}
And the $\ahg$-cowaist\footnote{In~\cite{Cecchini_2021sc}, this invariant is called $\ahg$-area.
We choose to rename it as $\ahg$-cowaist to be incoordination with the name of $\rK$-cowaist.} of $M$ is defined to be
\begin{equation*}
  \acw(M)=\sup\{\|\opR^E\|^{-1}| E \text{ satisfies (\ref{eq:ahg})}\}.
\end{equation*}

Using the $\rK$-cowaist, Gromov gives the following quantitative version of Lichnerowicz's theorem.
Let $(M,g)$ be a spin Riemannian manifold of even dimension.
\begin{equation}
  \label{eq:g-est}
  \inf_M \kappa_g\le C (\kcw(M))^{-1},
\end{equation}
where $C$ is a constant only depending on the dimension of $M$. In~\cite{Gromov_1996aa}, although the definition of $\ahg$-cowaist does not appear explicitly, the proof of (\ref{eq:g-est}) uses it and consists of two steps.
The first step is to show (\ref{eq:g-est}) with $\kcw(M)$ replaced by $\acw(M)$. The second step is proving the following comparison results between $\kcw(M)$ and $\acw(M)$.
\begin{theorem}
  \label{thm:com}
  Let $M$ be a closed Riemannian metric manifold $M$ of even dimension.
  There is a constant $c$ depending only on the dimension of $M$ such that
  \begin{equation}
    \label{eq:klea}
    \kcw(M) \le c \acw(M).
  \end{equation}
\end{theorem}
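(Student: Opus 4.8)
The plan is to show that every Hermitian bundle $E$ fulfilling the Chern-number condition~(\ref{eq:cn}) can be traded for a (virtual) bundle $F$, assembled from $E$ by a dimensionally bounded sequence of natural operations, which fulfils the $\ahg$-condition~(\ref{eq:ahg}) and whose curvature satisfies $\|\opR^F\|\le c\,\|\opR^E\|$ for a constant $c=c(\dim M)$. Granting this, one gets $\|\opR^E\|^{-1}\le c\,\|\opR^{F}\|^{-1}\le c\,\acw(M)$, and taking the supremum over all $E$ satisfying~(\ref{eq:cn}) yields~(\ref{eq:klea}). A preliminary remark makes the use of virtual bundles harmless: if $F=F_+-F_-$ is a formal difference of honest Hermitian bundles and $\int_M\ahg(M)\ch(F)\neq0$, then, since $\ch$ is additive, $\int_M\ahg(M)\ch(F_+)\neq\int_M\ahg(M)\ch(F_-)$, so at least one of $F_+,F_-$ already satisfies~(\ref{eq:ahg}), and each will inherit the curvature bound.

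Write $n=\dim M$ and $m=n/2$, and set $\omega=\prod_{k=1}^{l}\opc_{i_k}(E)$, a class of degree $n$ with $\int_M\omega\neq0$. The core construction produces a virtual bundle $F$ with $\ch(F)=\omega$. The tool is the Adams operations $\psi^k$: on the one hand the degree-$2j$ component of $\ch(\psi^kE)$ is $k^{j}$ times that of $\ch(E)$, so that, because the degree-$2j$ component of $\ch(E)$ vanishes once $2j>n$, one has on $M$ the finite identity $\ch(\psi^kE)=\sum_{j=0}^{m}k^{\,j}\ch_j(E)$, where $\ch_j(E)$ denotes the degree-$2j$ part; on the other hand each $\psi^kE$ is a universal integral polynomial (via Newton's identity) in the exterior powers $\Lambda^1E,\dots,\Lambda^kE$, hence is genuinely built from $E$. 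Inverting the Vandermonde system in $k=0,1,\dots,m$ expresses each individual component $\ch_j(E)$ as a rational combination $\sum_k a_{jk}\,\ch(\psi^kE)$; calling $E_j$ the corresponding virtual bundle we obtain $\ch(E_j)=\ch_j(E)$ exactly. Since every $\opc_i(E)$ is a polynomial in the $\ch_j(E)$ (Newton's identities relating elementary symmetric functions and power sums), $\omega$ is a polynomial in the $\ch_j(E)$; replacing each monomial $\ch_{j_1}(E)\cdots\ch_{j_s}(E)$ by $\ch(E_{j_1}\otimes\cdots\otimes E_{j_s})$ and taking the same rational combination yields a virtual $F$ with $\ch(F)=\omega$. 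As $\omega$ is concentrated in top degree and $\ahg(M)=1+(\text{higher order})$, we conclude $\int_M\ahg(M)\ch(F)=\int_M\omega\neq0$, so $F$ satisfies~(\ref{eq:ahg}).

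The main obstacle — and the only place the dimensional constant is spent — is the curvature bookkeeping, which must be carried through each operation. The relevant estimates are $\|\opR^{A\oplus B}\|=\max(\|\opR^A\|,\|\opR^B\|)$, $\|\opR^{A\otimes B}\|\le\|\opR^A\|+\|\opR^B\|$, $\|\opR^{A^\ast}\|=\|\opR^A\|$, and $\|\opR^{\Lambda^iA}\|\le i\,\|\opR^A\|$ (the last because $\Lambda^iA$ sits in $A^{\otimes i}$ with the induced connection). Since only $k\le m$ enters, each $\psi^kE$ is an integral combination of tensor products of boundedly many $\Lambda^iE$ with $i\le m$, so its honest $\pm$ parts have curvature $\le c_1(m)\,\|\opR^E\|$; the fixed rational coefficients $a_{jk}$ then give $\|\opR^{E_j^\pm}\|\le c_2(m)\,\|\opR^E\|$, and the tensor products of at most $m$ such factors, followed by a final rational combination, produce honest $\pm$ parts of $F$ with $\|\opR^{F_\pm}\|\le c(m)\,\|\opR^E\|$. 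Selecting the part of $F$ that satisfies~(\ref{eq:ahg}) as in the preliminary remark completes the argument. The one point demanding care is to keep the number of tensor factors, exterior-power degrees, and Adams indices all bounded purely in terms of $n$, so that $c$ depends on the dimension alone.
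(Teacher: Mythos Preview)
Your argument is correct and uses the same toolkit as the paper—Adams operations, Vandermonde inversion, Newton's identities, and the curvature estimates for the natural operations $\oplus,\otimes,(\cdot)^\ast,\Lambda^i$—but you organize the pieces differently, and the difference is worth recording. The paper proceeds in two stages: first (its Lemma~\ref{lm:alg}, proved via Adams operations and a Vandermonde argument) it produces an honest bundle $E_1=J_1(E)$ with $\int_M\ch(E_1)\neq0$; then, because $\ch(E_1)$ need not be concentrated in top degree, a \emph{second} Vandermonde argument (its Lemma~\ref{lm:ch}) is used to find some $\psi_{k_0}(E_1)$ with $\int_M\ahg(M)\ch(\psi_{k_0}(E_1))\neq0$. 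You collapse these two steps into one: by first isolating virtual bundles $E_j$ with $\ch(E_j)=\ch_j(E)$ exactly (all other graded pieces vanishing) and then tensoring, your $F$ has $\ch(F)=\omega$ sitting purely in top degree, so $\int_M\ahg(M)\ch(F)=\int_M\omega$ is automatic and Lemma~\ref{lm:ch} is never needed. The trade-off is that your $F$ is a priori only a rational virtual bundle, so you must clear denominators and split into honest summands at the end (which you do), whereas the paper works with honest bundles throughout at the cost of the extra lemma. Both routes yield a dimensional constant for the same reason: the Adams indices, exterior-power degrees, and number of tensor factors are all bounded by $m=\dim M/2$, so only finitely many admissible functors, depending on $m$ alone, ever appear.
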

In this note, we would like to clarify some arguments used in~\cite{Gromov_1996aa} to prove Theorem~\ref{thm:com}, see Remark~\ref{rk:gromov}.
In other words, we give a detailed proof of this theorem following Gromov's method.

Before proving Theorem~\ref{thm:com}, we note that it has the following corollary.
\begin{corollary}
  \label{cor:inf}
  If $\kcw(M) = +\infty$, then $\acw(M) = +\infty$.
\end{corollary}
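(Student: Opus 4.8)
The plan is to obtain Corollary~\ref{cor:inf} as a formal consequence of Theorem~\ref{thm:com}. Reading the inequality (\ref{eq:klea}) in the extended half-line $[0,+\infty]$, the hypothesis $\kcw(M)=+\infty$ forces
\begin{equation*}
  c\,\acw(M)\ge \kcw(M)=+\infty,
\end{equation*}
and since $c$ is a finite positive dimensional constant this yields $\acw(M)=+\infty$. Thus, once Theorem~\ref{thm:com} is available, the corollary requires no separate topological input.

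The one point deserving attention is whether (\ref{eq:klea}) is genuinely valid in the extended reals, i.e.\ whether the inequality remains meaningful when $\kcw(M)=+\infty$. I would address this by recalling the mechanism I expect to lie behind Theorem~\ref{thm:com}: to each Hermitian bundle $E$ satisfying (\ref{eq:cn}) one associates a bundle $F$, built from $E$ by a fixed sequence of natural operations ($\oplus$, $\otimes$, $\Lambda^k$, duals, and stabilization by trivial summands), which satisfies (\ref{eq:ahg}) and obeys a curvature estimate $\|\opR^F\|\le c'\|\opR^E\|$ with $c'$ depending only on $\dim M$. This bundle-wise estimate is precisely what produces (\ref{eq:klea}) after passing to reciprocals and taking suprema, and it holds verbatim for every admissible $E$, with no restriction on the size of $\|\opR^E\|$.

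With this in hand the limiting case is immediate. If $\kcw(M)=+\infty$, I would choose a sequence $E_j$ satisfying (\ref{eq:cn}) with $\|\opR^{E_j}\|\to 0$; the associated $F_j$ then satisfy (\ref{eq:ahg}) together with $\|\opR^{F_j}\|^{-1}\ge (c')^{-1}\|\opR^{E_j}\|^{-1}\to+\infty$, so $\acw(M)=+\infty$. The degenerate subcase in which the supremum defining $\kcw(M)$ is realized by an honestly flat $E$ (so $\|\opR^E\|=0$) is handled the same way: since $\opR^F$ is assembled from $\opR^E$ through the operations above, $\opR^E=0$ gives $\opR^F=0$, while $F$ still satisfies (\ref{eq:ahg}), whence again $\acw(M)=+\infty$.

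I expect no real obstacle here beyond the bookkeeping just described: the only thing to guard against is a hidden dependence of the construction $E\mapsto F$ on $\|\opR^E\|$ being bounded away from zero. Since that operation is purely topological and the accompanying curvature bound is homogeneous (linear) in $\opR^E$, no such dependence occurs, and the passage to the infinite case is automatic. In short, the substance lives entirely in Theorem~\ref{thm:com}, and the corollary is a clean extraction of its $+\infty$ endpoint.
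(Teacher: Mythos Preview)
Your proposal is correct and matches the paper's approach: the paper states Corollary~\ref{cor:inf} as an immediate consequence of Theorem~\ref{thm:com} and gives no separate proof. Your additional care in unpacking the bundle-wise mechanism (which is exactly Proposition~\ref{prop:klea} in the paper) to justify the $+\infty$ endpoint is more than the paper supplies, but it is the right justification and aligns with how the comparison theorem is actually proved.
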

This corollary, as well as its generalization for manifolds with boundary or non-compact manifolds, has found application in the literature, see~\cite{Cecchini_2021sc,Su_2021ma}.

This paper is organized as follows.
In Section~\ref{sec:pf-com}, after some algebraic preliminaries, we prove Theorem~\ref{thm:com}.
In Section~\ref{sec:so-re}, we give two remarks about Theorem~\ref{thm:com}, about the its generalization on non-closed manifolds and the reverse direction inequality of (\ref{eq:klea}) respectively.

\section{Proof of Theorem~\ref{thm:com}}
\label{sec:pf-com}
In the rest of this paper, we assume that the dimension of $M$ is even.
Using the definition of $\rK$-cowaist, we can see that the following result implies (\ref{eq:klea}).
\begin{proposition}
  \label{prop:klea}
  Fix a positive number $m_0$.
  Let $E$ be a Hermitian vector bundle\footnote{In the following, every Hermitian vector bundle carries a Hermitian connection implicitly.} over $M$ satisfying (\ref{eq:cn}) and
  \begin{equation}
    \label{eq:ba0}
    \Vert\opR^E\Vert^{-1} \ge m_0.
  \end{equation}
  Then there exists a Hermitian vector bundle $E'$ over $M$ satisfying (\ref{eq:ahg}) and
  \begin{equation}
    \label{eq:ca}
    \Vert\opR^{E'}\Vert^{-1} \ge c m_0
  \end{equation}
  where $c$ is constant only depending on the dimension of $M$.
\end{proposition}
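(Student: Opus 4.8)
The plan is to build $E'$ from $E$ using only a \emph{dimensionally bounded} number of standard operations---direct sums, tensor products, exterior powers, and Adams operations---so that the curvature norm degrades by a factor depending only on $n \coloneqq \dim M$. I would first record the elementary curvature estimates $\Vert \opR^{E_1 \oplus E_2}\Vert = \max\{\Vert \opR^{E_1}\Vert, \Vert \opR^{E_2}\Vert\}$, $\Vert \opR^{E_1 \otimes E_2}\Vert \le \Vert \opR^{E_1}\Vert + \Vert \opR^{E_2}\Vert$, and $\Vert \opR^{\Lambda^i E}\Vert \le i\,\Vert \opR^E\Vert$. The decisive constraint they impose is that only exterior powers $\Lambda^i$ with $i$ bounded in terms of $n$ may be used: since $\operatorname{rank} E$ is not controlled, the estimate for $\Lambda^i$ is worthless unless $i$ stays dimensionally bounded, so every operation in the construction must keep its exterior-power degrees and tensor arities bounded by $n$.

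Next I would convert the homological hypothesis~\eqref{eq:cn} into a statement about the Chern character. Regarding $\opc_i(E)$ as the $i$-th elementary symmetric function of the Chern roots and using Newton's identities together with $\ch_m(E) = p_m/m!$ (where $p_m$ is the $m$-th power sum), each factor $\opc_{i_k}(E)$ is a polynomial with rational coefficients in $\ch_1(E), \dots, \ch_{i_k}(E)$. Expanding the product and integrating, the nonvanishing of $\int_M \prod_k \opc_{i_k}(E)$ forces
\begin{equation*}
  \int_M \ch_{m_1^*}(E)\cdots \ch_{m_s^*}(E) \ne 0
\end{equation*}
for at least one tuple $\vec m^* = (m_1^*,\dots,m_s^*)$ with each $m_t^* \ge 1$ and $\sum_t m_t^* = n/2$; in particular $s \le n/2$.

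The core of the argument is to realize this single product as the $\ahg$-twisted pairing of an explicit virtual bundle with controlled curvature. For positive integers $k_1,\dots,k_s$ I would use the Adams operations $\psi^{k}$, normalized by $\ch_m(\psi^{k}E) = k^{m}\ch_m(E)$; expanding $\ch(\bigotimes_t \psi^{k_t}E) = \prod_t \sum_{m_t} k_t^{m_t}\ch_{m_t}(E)$ yields
\begin{equation*}
  \int_M \ahg(M)\,\ch\Bigl(\bigotimes_{t=1}^{s}\psi^{k_t}E\Bigr)
  = \sum_{\vec m}\Bigl(\prod_{t} k_t^{m_t}\Bigr)
  \int_M \ahg(M)\,\ch_{m_1}(E)\cdots \ch_{m_s}(E),
\end{equation*}
a polynomial in $\vec k = (k_1,\dots,k_s)$. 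The monomial $\prod_t k_t^{m_t^*}$ occurs for the \emph{single} tuple $\vec m^*$, and because $\sum_t m_t^* = n/2$ its total $k$-degree is maximal, so its coefficient receives a contribution only from the degree-$0$ part $\ahg_0 = 1$ of $\ahg(M)$ and therefore equals $\int_M \ch_{m_1^*}(E)\cdots \ch_{m_s^*}(E)\ne 0$. By multivariate Lagrange interpolation on the grid $\vec k \in \{1,\dots,n/2+1\}^s$ there are rational numbers $a_{\vec k}$, depending only on $n$ and $s$, such that $\xi \coloneqq \sum_{\vec k} a_{\vec k}\bigotimes_t \psi^{k_t}E$ satisfies $\int_M \ahg(M)\,\ch(\xi) = \int_M \ch_{m_1^*}(E)\cdots \ch_{m_s^*}(E) \ne 0$. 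Since each $k_t \le n/2+1$ and $s \le n/2$, Newton's formula writing $\psi^{k}$ through $\Lambda^1,\dots,\Lambda^{k}$ exhibits $\xi$ as a $\bZ$-linear combination of honest bundles, each built from $E$ by exterior powers of degree $\le n/2+1$ and at most $n/2$ tensor factors; by the curvature estimates each such bundle $F$ satisfies $\Vert \opR^{F}\Vert \le B\,\Vert \opR^{E}\Vert$ for a constant $B$ depending only on $n$.

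Writing $\xi = \sum_\alpha N_\alpha F_\alpha$ with $N_\alpha \in \bZ$ and the $F_\alpha$ honest bundles as above, the relation $\sum_\alpha N_\alpha \int_M \ahg(M)\,\ch(F_\alpha) \ne 0$ forces $\int_M \ahg(M)\,\ch(F_\alpha) \ne 0$ for some $\alpha$. Setting $E' = F_\alpha$ then produces a bundle satisfying~\eqref{eq:ahg} with $\Vert \opR^{E'}\Vert \le B\,\Vert \opR^{E}\Vert \le B/m_0$, hence $\Vert \opR^{E'}\Vert^{-1} \ge m_0/B$, which is~\eqref{eq:ca} with $c = 1/B$. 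I expect the main difficulty to be exactly the step realizing the chosen product of Chern characters: one must isolate a single monomial \emph{and} annihilate the positive-degree part of $\ahg(M)$ simultaneously, while keeping all exterior-power degrees and Adams parameters bounded purely in terms of $n$ so that the uncontrolled $\operatorname{rank} E$ never enters the curvature estimate. The observation that the relevant monomial sits in top total $k$-degree, and hence pairs only with $\ahg_0 = 1$, is what lets a single interpolation accomplish both tasks.
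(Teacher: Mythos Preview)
Your argument is correct and uses the same ingredients as the paper (Newton's identities relating $\opc_i$ and $\ch_i$, Adams operations, a Vandermonde/interpolation step, and the admissible curvature estimates), but it organizes them differently. The paper proceeds in two separate stages: first, via Lemma~\ref{lm:alg} (Gromov's ``Trivial Algebraic Lemma'', proved by induction on the number of factors, applying a single Adams operation at each step), it produces a bundle $E_1=J_1(E)$ with $\int_M\ch(E_1)\neq 0$; second, via Lemma~\ref{lm:ch}, it applies one more Adams operation $\psi_{k_0}$ and a one-variable Vandermonde argument to obtain $\int_M\ahg(M)\ch(\psi_{k_0}(E_1))\neq 0$. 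You instead go directly to the tensor product $\bigotimes_t\psi^{k_t}E$ and exploit the observation that the monomial $\prod_t k_t^{m_t^*}$ has maximal total $k$-degree $n/2$, so its coefficient pairs only with $\ahg_0=1$; a single multivariate interpolation then handles both the ``make $\ch$ nonvanishing'' and ``absorb the $\ahg$-twist'' tasks at once. Your route is more streamlined (no induction on $\nu$), while the paper's is more modular, isolating Lemma~\ref{lm:ch} as a reusable standalone statement. Two minor remarks: once you know the polynomial $P(\vec k)$ has a nonzero coefficient you do not actually need the full interpolation, since a nonzero polynomial of degree $\le n/2$ in each variable cannot vanish on all of $\{1,\dots,n/2+1\}^s$; and after expanding each $\psi^{k_t}$ through Newton's formula the honest summands may carry more than $n/2$ tensor factors (up to roughly $\sum_t k_t$), but this count is still bounded in terms of $n$, so your curvature constant $B$ remains dimensional.
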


\subsection{Two algebraic lemmas}
\begin{definition}
  \label{def:ad-f}
  Let $\cV$ be the category of the Hermitian vector bundles over a fixed manifold.We call a functor $J:\cV \times \cdots \times \cV \rightarrow \cV$ \emph{admissible} if $J$ is a finite composition of following functors:
  \begin{enumerate}
  \item $I$, the identify functor of $\cV$;
  \item $E \rightarrow \bC^k$, where $\bC^k$ is the trivial bundle;
  \item $E \rightarrow E'$, where $E'$ is the dual bundle of $E$;
  \item $E \rightarrow \wedge^k E$, where $\wedge^k E$ is the $k$-th wedge product bundle of $E$;
  \item $E, F \rightarrow E\oplus F$, the direct sum;
  \item $E, F \rightarrow E\otimes F$, the tensor product.
  \end{enumerate}
  Let $E,F$ (resp.\ $E_1,\cdots,E_k,F$) be Hermitian vectors bundles.
We say that \emph{$F$ is constructed from $E$ (resp.\ $E_1,\cdots,E_k$) in an admissible way}, if there exists an admissible functor $J$ and an isomorphism between $F$ and $J(E)$ (resp.\ $J(E_1,\cdots,E_k)$), which preserves the metric and the connection.
\end{definition}
\begin{remark}
  Note that the six simple operations on Hermitian vector bundles listed in Definition~\ref{def:ad-f} exist on every manifold.
  Therefore, for two manifolds $M$ and $N$, there is a natural bijection between the sets of admissible functors defined by $M$ and $N$.
  In this sense, the definition of admissible functors does not depend on the ambient manifolds.
  We would like to remark that in the following, when we say that a constant depends only on an admissible functor $J$, we precisely mean that the constant does not depend on the choice of ambient manifold defining $J$.
\end{remark}
For example, $\mathbb{C}^n$, $E'$, $E\oplus E$ and $(\otimes^n E) \otimes (\wedge^k E)$ are all constructed from $E$ in an admissible way.
\begin{proposition}
  If $J$ is an admissible functor, there is a constant $C_0$ such that
  \begin{equation}
    \label{eq:c0}
    \Vert \opR^{J(E)} \Vert \le C_J \Vert \opR^{E} \Vert
  \end{equation}
  holds for any Hermitian vector bundle $E$, where $C_0$ is a constant depending only on $J$.
\end{proposition}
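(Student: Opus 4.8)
The plan is to induct on the number of elementary functors appearing in a fixed decomposition of $J$. Since the norm $\Vert\opR^{E}\Vert$ is defined pointwise and fiberwise, it suffices to understand, for each of the six elementary operations, how the curvature endomorphism $\opR(\alpha\wedge\beta)$ transforms, and then to track the resulting operator-norm estimates through the composition. To make the induction go through cleanly for the binary operations, I would actually prove the slightly stronger multivariable statement
\begin{equation*}
  \Vert \opR^{J(E_1,\dots,E_k)}\Vert \le C_J \max_{1\le i \le k} \Vert \opR^{E_i}\Vert,
\end{equation*}
from which the proposition follows at once by setting all $E_i = E$.

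For the base cases I would record the standard formulas for the induced metric connections, each of which is pointwise and reduces to a statement of linear algebra on the fibers. The identity functor gives equality, so $C = 1$. The trivial bundle $\bC^k$ carries the flat connection, so $\opR^{\bC^k}=0$ and the estimate holds with constant $0$. For the dual bundle one has $\opR^{E'}(\alpha\wedge\beta) = -\opR^{E}(\alpha\wedge\beta)^{*}$, so the operator norm is unchanged and $C=1$. For the direct sum the curvature is block diagonal, whence $\Vert \opR^{E\oplus F}(\alpha\wedge\beta)\Vert = \max\{\Vert \opR^{E}(\alpha\wedge\beta)\Vert,\Vert \opR^{F}(\alpha\wedge\beta)\Vert\}$, again with $C=1$. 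For the tensor product the Leibniz rule gives $\opR^{E\otimes F}(\alpha\wedge\beta) = \opR^{E}(\alpha\wedge\beta)\otimes \id_F + \id_E \otimes \opR^{F}(\alpha\wedge\beta)$, and since $\Vert A\otimes \id\Vert = \Vert A\Vert$ the triangle inequality yields a bound by $2\max\{\Vert\opR^{E}\Vert,\Vert\opR^{F}\Vert\}$. Finally, for $\wedge^{k}E$ the induced curvature acts as a derivation,
\begin{equation*}
  \opR^{\wedge^{k}E}(\alpha\wedge\beta)(e_1\wedge\cdots\wedge e_k) = \sum_{j=1}^{k} e_1 \wedge \cdots \wedge \opR^{E}(\alpha\wedge\beta)e_j \wedge\cdots\wedge e_k,
\end{equation*}
which is a restriction to $\wedge^{k}E$ of a sum of $k$ operators on $\otimes^{k}E$, each of norm $\Vert \opR^{E}(\alpha\wedge\beta)\Vert$; hence $C=k$.

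For the inductive step I would write $J$ as an outer elementary functor applied to admissible functors of strictly smaller complexity. Applying the appropriate base-case estimate to the outer functor and the inductive hypothesis to the inner functors, the constants multiply (and for the binary operations one absorbs the two inner maxima into a single outer maximum), so the final constant $C_J$ depends only on the combinatorial shape of the chosen decomposition of $J$, and in particular not on the bundles nor on the ambient manifold. This establishes the stronger statement and therefore the proposition.

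Each of these steps is routine, so the only point that genuinely requires care is the derivation estimate for the wedge power, together with the verification that $\Vert A\otimes\id\Vert = \Vert A\Vert$ and the block-diagonal identity are being applied to the correct induced metric connections; once these pointwise linear-algebra facts are in place, the bookkeeping is immediate. I expect the main obstacle to be purely organizational, namely setting up the induction on functor complexity carefully enough that the multivariable compositions and the manifold-independence of $C_J$ are handled uniformly.
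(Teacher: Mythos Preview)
Your proposal is correct and follows essentially the same approach as the paper: reduce to the six elementary functors and track the curvature through each, with the tensor-product case handled via $\Vert A\otimes\id\Vert=\Vert A\Vert$ and the Leibniz rule exactly as the paper does. The paper only spells out the tensor-product case and leaves the remaining operations and the induction implicit; your multivariable reformulation and explicit treatment of the inductive step make the argument more complete, but the underlying idea is the same.
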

\begin{proof}
  By the definition of the admissible functor, we only need to check (\ref{eq:c0}) for the functors listed in Definition~\ref{def:ad-f}.
  We check (\ref{eq:c0}) for the tensor product as an example.

  Let $V,W$ be two Hermitian vector spaces and $A,B$ be endomorphisms on $V,W$ respectively.
  We note that
  \begin{equation*}
    (A \otimes \id)^*(A \otimes \id) = (A^* \otimes \id)(A \otimes \id) = A^*A \otimes \id.
  \end{equation*}
  By the relation between the singular values and the matrix norm, the above equality implies that
  \begin{equation*}
    \Vert A \otimes \id \Vert = \Vert A \Vert.
  \end{equation*}
  As a result, we have
  \begin{equation}
    \label{eq:m-inq}
    \Vert A \otimes \id + \id \otimes B\Vert \le \Vert A \Vert + \Vert B \Vert.
  \end{equation}

  By the definition of the tensor product of connections, for Hermitian vector bundles $E,F$, we have
  \begin{equation*}
    \opR^{E\otimes F} = \opR^E \otimes \id + \id \otimes \opR^F.
  \end{equation*}
  Then, the needed inequality
  \begin{equation*}
    \Vert \opR^{E\otimes F} \Vert \le \Vert \opR^E \Vert + \Vert \opR^F \Vert
  \end{equation*}
  follows from (\ref{eq:m-inq}) immediately.
\end{proof}

After Gromov, we use the following algebraic lemma to prove Proposition~\ref{prop:klea}.
\begin{lemma}
  \label{lm:alg}
  Fix $N\in \bZ_{+}$.
  There is a finite set of admissible functors $\cC_N = \aset{J_i}$
satisfying the following property.
  For any positive integer $K\le N$ and any partition of $K$ by positive integers, $K = \sum_{l = 1}^{\nu} a_l$, there exist $\lambda_i\in \bQ$ such that
  \begin{equation}
    \label{eq:cal}
    \prod_{l=1}^{\nu} \opc_{a_l}(E) = \sum_i \lambda_i\ch_K(J_i(E))
  \end{equation}
  holds for any Hermitian vector bundle $E$, where $\ch_K$ denotes the degree $2K$ component of the Chern character.
\end{lemma}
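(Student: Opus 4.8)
The plan is to pass to the universal setting of symmetric functions in the Chern roots, reduce the statement to a spanning problem, and then resolve that problem using Adams operations. First I would invoke the splitting principle: writing $x_1,x_2,\dots$ for the Chern roots of $E$, one has $\opc_i(E)=e_i(x)$ (elementary symmetric functions) and $\ch_j(E)=p_j(x)/j!$ with $p_j$ the $j$-th power sum. A product $\prod_l \opc_{a_l}(E)$ is then $\prod_l e_{a_l}(x)$, a symmetric function homogeneous of degree $K$. Since both $\{\prod_l e_{a_l}\}$ and $\{p_\mu=\prod_i p_{\mu_i}\}$, as $\mu$ runs over partitions of $K$, are $\bQ$-bases of the degree-$K$ part of the ring of symmetric functions, and since $P_\mu:=\prod_i \ch_{\mu_i}(E)$ is a nonzero rational multiple of $p_\mu$, it suffices to express each $P_\mu$ (with $\mu\vdash K$, all parts positive) as a rational combination $\sum_i \lambda_i \ch_K(J_i(E))$ with $J_i$ admissible; identity (\ref{eq:cal}) then follows from the invertible rational change of basis. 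Because there are finitely many such $\mu$ with $|\mu|=K\le N$, finiteness of $\cC_N$ will be automatic once the individual expressions are produced.

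The engine is the Adams operation $\psi^k$. In the $\lambda$-ring $\rK(M)$ it equals the Newton polynomial in the exterior powers, so $\psi^k E$ is an explicit $\bZ$-linear combination (with signs) of honest bundles built from $E$ by $\wedge$, $\otimes$ and $\oplus$, all of which are admissible; moreover $\ch(\psi^k E)=\sum_i e^{k x_i}$, hence $\ch_j(\psi^k E)=k^j\ch_j(E)$ for every $j$. Fixing $\mu=(\mu_1,\dots,\mu_\nu)\vdash K$ and using that $\ch$ is multiplicative, the degree-$2K$ component of $\psi^{k_1}E\otimes\cdots\otimes\psi^{k_\nu}E$ is
\[
  \ch_K\big(\psi^{k_1}E\otimes\cdots\otimes\psi^{k_\nu}E\big)=\sum_{j_1+\cdots+j_\nu=K}\Big(\prod_{t=1}^{\nu}k_t^{\,j_t}\Big)\prod_{t=1}^{\nu}\ch_{j_t}(E),
\]
a polynomial in $(k_1,\dots,k_\nu)$ whose coefficient of the monomial $k_1^{\mu_1}\cdots k_\nu^{\mu_\nu}$ is exactly $\prod_t \ch_{\mu_t}(E)=P_\mu$. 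Crucially, since every $\mu_t\ge 1$, this selected term carries no factor $\ch_0(E)=\operatorname{rank}E$, so its extraction yields a rank-free expression.

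I would then extract this coefficient by finite differences: the displayed polynomial has degree at most $K$ in each $k_t$, so evaluating on the grid $k_t\in\{1,\dots,K+1\}$ and applying the tensor product of the one-dimensional interpolation functionals writes $P_\mu$ as a fixed rational combination of the values $\ch_K(\psi^{k_1}E\otimes\cdots\otimes\psi^{k_\nu}E)$. Expanding each $\psi^{k_t}$ back into honest admissible bundles and using additivity of $\ch_K$ over virtual combinations turns this into a relation $P_\mu=\sum_i\lambda_i\ch_K(J_i(E))$ with $\lambda_i\in\bQ$ and each $J_i$ a tensor product of exterior powers of $E$, i.e. admissible. Collecting, over all $\mu\vdash K$ and all $K\le N$, the finitely many functors so produced (the weights satisfy $k_t\le N+1$) gives the finite family $\cC_N$.

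The main obstacle, and the only substantive point beyond bookkeeping, is the rank issue: a naive tensor-power computation produces terms containing $\ch_0(E)=\operatorname{rank}E$, which is not a characteristic class in the universal sense and cannot occur in a relation valid for all $E$. The device above resolves it cleanly, since the auxiliary weights $k_t$ introduced by the Adams operations separate the monomials $\prod_t\ch_{j_t}(E)$ by multidegree, and selecting the all-positive multidegree $(\mu_1,\dots,\mu_\nu)$ automatically discards every term involving $\ch_0$. The facts I would verify with care are precisely that $\psi^k$ is a genuine $\bZ$-combination of admissible bundles (Newton's identity in the $\lambda$-ring) and that $\ch_j(\psi^k E)=k^j\ch_j(E)$.
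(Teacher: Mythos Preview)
Your proof is correct and shares the paper's core mechanism: reduce from Chern classes to products of Chern-character components via the symmetric-function basis change, then use Adams operations together with the identity $\ch_j(\psi^k E)=k^j\ch_j(E)$ and a Vandermonde-type inversion to isolate the desired product. The paper organizes this second step as an induction on the number $\nu$ of factors, first proving a two-bundle lemma (expressing $\ch_i(F_1)\ch_j(F_2)$ as a rational combination of $\ch_{i+j}(\psi_l(F_1)\otimes F_2)$ for $l=1,\dots,i+j+1$ via a one-variable Vandermonde) and then peeling off one factor at a time; you instead tensor $\psi^{k_1}E\otimes\cdots\otimes\psi^{k_\nu}E$ all at once and run a multivariate interpolation on the grid $\{1,\dots,K+1\}^\nu$. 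Your one-shot route is a bit more direct and makes the structure of $\cC_N$ explicit in a single stroke, while the paper's inductive route keeps the linear algebra one-dimensional at each step. Your emphasis on the ``rank issue'' is somewhat overstated: the identity you extract holds for all $E$ simply because the interpolation coefficients are fixed rationals independent of $E$, so the $\ch_0(E)$ contributions cancel automatically in the combination; the paper does not flag this as an obstacle at all.
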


This lemma is just a restatement of~\cite[p.~36, Trivial Algebraic Lemma]{Gromov_1996aa}.
For readers' convenience, we also provide a proof for this lemma in Subsection~\ref{sub:trivial}.

To state another algebraic lemma, let $\psi_k$ denote the $k$-th Adams operation of a complex vector bundle.
It is known that $\psi_k(E)$ can be constructed from $E$ in an admissible way,\footnote{More precisely, $\psi_k(E)$ is a virtual bundle $F_1 - F_2$, where $F_1, F_2$ can be constructed from $E$ in an admissible way respectively.}~\cite{Adams_1962aa,Atiyah_1969gr}.
The Chern character of $E$ and $\psi_k(E)$ have the following relation.
\begin{equation}
  \label{eq:ch-psi}
  \ch(\psi_k(E)) = \sum_{i=0}^{\dim M/2} \ch_i(E) k^i.
\end{equation}
The second algebraic lemma we need is as follows.
\begin{lemma}
  \label{lm:ch}
  Let $E$ be a Hermitian vector bundle over $M$.
  If for $1\le k \le \dim M /2 + 1$,
  \begin{equation}
    \label{eq:ae}
    \int_M \ahg(M) \ch(\psi_k(E)) = 0,
  \end{equation}
  then $\int_M \ch(E) = 0$.
\end{lemma}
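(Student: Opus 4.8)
The plan is to convert the hypotheses into the vanishing of a single polynomial in $k$, and then read off the conclusion from its leading coefficient. Write $n = \dim M/2$, so that $M$ is $2n$-dimensional and (\ref{eq:ae}) is imposed for the $n+1$ values $k = 1, \ldots, n+1$.

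First I would substitute (\ref{eq:ch-psi}) into (\ref{eq:ae}) and interchange the finite sum with the integral. Abbreviating the resulting characteristic numbers by
\[
  b_i = \int_M \ahg(M)\,\ch_i(E) \in \bQ, \qquad 0 \le i \le n,
\]
the hypothesis becomes the statement that
\[
  p(k) := \sum_{i=0}^{n} b_i\,k^i = 0 \qquad \text{for each } k = 1, 2, \ldots, n+1.
\]

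The key step is then elementary: $p$ is a polynomial in $k$ of degree at most $n$ that vanishes at the $n+1$ distinct points $1, \ldots, n+1$, so $p \equiv 0$ and in particular every coefficient $b_i$ vanishes. (Equivalently, the $(n+1)\times(n+1)$ matrix $(k^i)_{k,i}$ is a Vandermonde matrix with distinct nodes, hence invertible, forcing $b_0 = \cdots = b_n = 0$.) For the conclusion I only need the leading coefficient: $b_n = 0$.

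It remains to identify $b_n$ with $\int_M \ch(E)$, and this is the one place that calls for care. Since $\ahg(M) = 1 + (\text{terms of positive degree})$ and $\ch_n(E)$ already has degree $2n = \dim M$, the only degree-$2n$ contribution to $\ahg(M)\,\ch_n(E)$ comes from the constant term of $\ahg(M)$; all other products have degree exceeding $2n$ and integrate to zero, so $b_n = \int_M \ch_n(E)$. Finally, integration over the $2n$-dimensional manifold $M$ selects exactly the degree-$2n$ part of the total Chern character, whence $\int_M \ch(E) = \int_M \ch_n(E) = b_n = 0$, which is the desired conclusion. The argument presents no real obstacle once the polynomial structure behind the $n+1$ conditions is recognized; the only bookkeeping worth double-checking is the degree count that pins the top coefficient $b_n$ to the quantity $\int_M \ch(E)$.
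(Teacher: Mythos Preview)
Your argument is correct and is essentially the paper's own proof: the paper sets $a_i = \int_M \ahg_{n-i}(M)\,\ch_i(E)$, which by degree counting equals your $b_i = \int_M \ahg(M)\,\ch_i(E)$, and then invokes the invertibility of the same Vandermonde matrix you mention parenthetically to conclude $a_n = 0$. Your phrasing via a polynomial of degree $\le n$ with $n+1$ roots is just the standard reformulation of that Vandermonde step.
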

\begin{remark}
  \label{rk:gromov}
  In~\cite[p.~36]{Gromov_1996aa}, Gromov uses a result similar to Lemma~\ref{lm:ch}.
  But the condition of the lemma, (\ref{eq:ae}), is replaced with the condition that for all $k$,
  \begin{equation*}
    \int_M \ahg(M) (\ch(E))^k = 0.
  \end{equation*}
  In our opinion, it is not very straightforward to see why such a condition also yields the same conclusion of Lemma~\ref{lm:ch}.
  Our main motivation to write this note is to clarify this point.
\end{remark}

\begin{proof}[Proof of Lemma~\ref{lm:ch}]
  Let $\dim M = 2n$.
  Denote the degree $2l$ component of $\ahg(M)$ by $\ahg_l(M)$.
  Note that if $l$ is an odd number, $\ahg_l(M) = 0$.
  For $0\le i \le n$, let
  \begin{equation*}
    a_i \coloneqq \int_M \ahg_{n-i}(M) \ch_i(E)
  \end{equation*}
  and $\mathbf{a} \coloneqq [a_0, a_1, \cdots, a_n]^{\rT}$.
  Then by (\ref{eq:ch-psi}), the condition (\ref{eq:ae}) gives
  \begin{equation}
    \label{eq:ai}
    \bfL \mathbf{a} = \mathbf{0},\quad \text{where}\quad
    \bfL \coloneqq
    \begin{bmatrix}
      1 & 1 & 1 & \cdots & 1 \\
      1 & 2 & 2^2 & \cdots & 2^n \\
      1 & 3 & 3^2 & \cdots & 3^n \\
      \vdots& \vdots &  \vdots  & \ddots & \vdots \\
      1 & n+1 & (n+1)^2 & \cdots & (n+1)^n
    \end{bmatrix}.
  \end{equation}
  Since $\bfL$ is a Vandermonde matrix, its determinant is $\det{\bfL} = \prod_{1\le i< j \le n+1}(j-i) \neq 0$.
  As a result, (\ref{eq:ai}) implies $\mathbf{a} = 0$, which means $\int_M \ch(E) = a_n = 0$.
\end{proof}

\subsection{Proof of Proposition~\ref{prop:klea}}
We use Lemma~\ref{lm:alg} and Lemma~\ref{lm:ch} to show this proposition.

Take the constant $N$ in Lemma~\ref{lm:alg} to be $\dim M/2$.
Since $\cC_N$ is a finite set, by the estimate (\ref{eq:c0}), we have
\begin{equation}
  \label{eq:est-c}
  \sup_{J\in \cC_N} \Vert \opR^{J(E)} \Vert \le A_N \Vert \opR^{E} \Vert,
\end{equation}
where $A_N = \sup_{J\in \cC_N} C_{J}$ is a constant depending only on $N$.

Since there is a nonvanishing Chern number for $E$, due to Lemma~\ref{lm:alg}, we can find $J_1 \in \cC_N$ and $E_1\coloneqq J_1(E)$ such that
\begin{equation*}
  \int_M \ch(E_1) = \int_M\ch_N(E_1) \neq 0.
\end{equation*}
By (\ref{eq:ba0}) and (\ref{eq:est-c}), we also have
\begin{equation*}
  \label{eq:est-e1}
  \Vert \opR^{E_1} \Vert \le A_N /m_0.
\end{equation*}
Then by Lemma~\ref{lm:ch}, there exists an integer $1 \le k_0 \le \dim M/2 + 1$ satisfying
\begin{equation*}
  \int_M \ahg(M) \ch(\psi_{k_0}(E_1)) \neq 0.
\end{equation*}
As we have noted, there exist admissible functors $G_{k_0}^1,G_{k_0}^2$ such that $\psi_{k_0}(E_1)$ is a virtual bundle $G_{k_0}^1(E_1) - G_{k_0}^2(E_1)$.
Therefore, the above inequality implies that at least one of $G_{k_0}^1(E_1)$ and $G_{k_0}^2(E_1)$, say $G_{k_0}^1(E_1)$, satisfies
\begin{equation*}
  \int_M \ahg(M) \ch(G_{k_0}^1(E_1)) \neq 0.
\end{equation*}
Using (\ref{eq:c0}) again, we have
\begin{equation*}
  \Vert \opR^{G_{k_0}^1(E_1)} \Vert \le C_{k_0} \Vert \opR^{E_1} \Vert \le \max_{1 \le k_0 \le N +1}C_{k_0} A_N/m_0,
\end{equation*}
where $C_{k_0}  = \max(C_{G_{k_0}^1}, C_{G_{k_0}^2})$.
Since the functors $G_{k_0}^1, G_{k_0}^2$ depend only on $k_0$, $C_{k_0}$ is also a constant depending only on $k_0$.
Therefore, $G_{k_0}^1(E_1)$ satisfies (\ref{eq:ca}) and the proof of Proposition~\ref{prop:klea} is finished.

\subsection{Proof of Lemma~\ref{lm:alg}.}
\label{sub:trivial}
As we will see, the proof of Lemma~\ref{lm:alg} is similar to the proof of Lemma~\ref{lm:ch} in some sense.

Firstly, by~\cite[Lemma~2.1.6]{Gilkey_1995aa}, we know that $\opc_i(E)$ can be represented by a homogeneous polynomial of $\aset{\ch_j(E)}$ and such a polynomial is independent of the choice of $E$.
Therefore, we only need to prove Lemma~\ref{lm:alg} with (\ref{eq:cal}) replaced by the following equality.
\begin{equation}
  \label{eq:chal}
  \prod_{l=1}^{\nu} \ch_{a_l}(E) = \sum_i \lambda_i\ch_K(J_i(E)).
\end{equation}
We will prove this result by using induction on $\nu$.
More specifically, for each $\nu$, we construct a finite set $\cC_N^\nu$ inductively such that by choosing $J_i\in \cC_N^\nu$, (\ref{eq:chal}) holds for $\nu$.

As a first step, set $\cC_N^1 = \aset{I}$.
By definition, since $I(E) = E$, (\ref{eq:chal}) holds for $\nu = 1$.
Supposing that for $\nu\ge 1$, we have constructed a finite set $\cC_N^{\nu}$ such that by choosing $J_i\in \cC_N^\nu$, (\ref{eq:chal}) holds for $\nu$.
We will construct $\cC_N^{\nu+1}$ using $\cC_N^{\nu}$.

For $K\le N$, we take an abitrary $\nu+1$ partition of $K$, $K= a_0 + \cdots + a_{\nu}$, $a_i\in \bZ_+$.
Let $K_1 \coloneqq a_1 + \cdots + a_{\nu} \le N$.
Using the induction assumption for $\prod_{l=1}^{\nu} \ch_{a_l}(E)$, we can find $\lambda_j\in \bQ$ such that
\begin{equation}
  \label{eq:ch-nu}
  \ch_{a_0}(E)\prod_{l=1}^{\nu} \ch_{a_l}(E) = \sum_j \lambda_j\ch_{a_0}(E)\ch_{K_1}(J_j(E)),
\end{equation}
where $J_j\in \cC_N^{\nu}$.
To deal with the r.h.s.\ of (\ref{eq:ch-nu}), we use the following result, which is a variation of the $\nu=2$ case of Lemma~\ref{lm:alg}.
\begin{lemma}
  \label{lm:alg2}
  Let $i,j\in \bZ_{\ge1}$.
  There is a finite set of admissible functors $\aset{G_l}$ and $\lambda_l\in \bQ$ such that
  \begin{equation*}
    \ch_i(F_1)\ch_j(F_2) = \sum_l \lambda_l\ch_{i+j}(G_l(F_1,F_2))
  \end{equation*}
  holds for any two Hermitian vector bundles $F_1,F_2$.
\end{lemma}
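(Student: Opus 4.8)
The plan is to isolate the single bidegree term $\ch_i(F_1)\ch_j(F_2)$ out of the full degree $2(i+j)$ component of $\ch(F_1\otimes F_2)$ by exploiting the way Adams operations rescale the graded pieces of the Chern character. Set $K=i+j$. The starting point is the multiplicativity of the Chern character, $\ch(A\otimes B)=\ch(A)\ch(B)$, which after extracting the degree $2K$ component gives the unwanted mixture
\begin{equation*}
  \ch_K(F_1\otimes F_2)=\sum_{a=0}^{K}\ch_a(F_1)\ch_{K-a}(F_2).
\end{equation*}
To separate these $K+1$ terms, I would insert an Adams operation on the first factor and then invert the resulting system, exactly parallel to the Vandermonde step in the proof of Lemma~\ref{lm:ch}.

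First I would apply (\ref{eq:ch-psi}) to compute, for each positive integer $k$,
\begin{equation*}
  \ch_K(\psi_k(F_1)\otimes F_2)=\sum_{a=0}^{K}k^{a}\,\ch_a(F_1)\ch_{K-a}(F_2).
\end{equation*}
Reading the right-hand side as a polynomial in $k$, its coefficients are precisely the $K+1$ products $\ch_a(F_1)\ch_{K-a}(F_2)$ that we wish to disentangle. Evaluating at the $K+1$ nodes $k=1,2,\ldots,K+1$ yields a linear system whose coefficient matrix is the $(K+1)\times(K+1)$ Vandermonde matrix on these nodes; as in Lemma~\ref{lm:ch}, it is invertible with rational inverse. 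Solving for the $a=i$ coefficient (so that $K-a=j$) produces rational numbers $\mu_1,\ldots,\mu_{K+1}$ with
\begin{equation*}
  \ch_i(F_1)\ch_j(F_2)=\sum_{k=1}^{K+1}\mu_k\,\ch_K(\psi_k(F_1)\otimes F_2),
\end{equation*}
which is the asserted identity up to rewriting each summand in admissible terms.

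It then remains to confirm that each $\psi_k(F_1)\otimes F_2$ is expressible through admissible functors of $(F_1,F_2)$. By the discussion following (\ref{eq:ch-psi}), $\psi_k(F_1)$ is a virtual bundle $G_k^1(F_1)-G_k^2(F_1)$ with $G_k^1,G_k^2$ admissible; tensoring with $F_2$ preserves admissibility, so $\psi_k(F_1)\otimes F_2=(G_k^1(F_1)\otimes F_2)-(G_k^2(F_1)\otimes F_2)$. Since $\ch_K$ is additive on virtual bundles, each $\ch_K(\psi_k(F_1)\otimes F_2)$ splits as a difference of $\ch_K$ of two genuine admissible bundles, and absorbing the sign into the rational coefficients delivers the finite family $\aset{G_l}$ and the scalars $\lambda_l\in\bQ$ required by the statement. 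The one point demanding care — the place where one must argue about admissibility rather than merely manipulate cohomology classes formally — is the bookkeeping of this virtual-bundle step: one must verify that the functors $G_k^1,G_k^2$ depend only on $k$, and hence, through the fixed choice of nodes $k=1,\ldots,K+1$, only on $i$ and $j$, so that the resulting set $\aset{G_l}$ is finite and independent of the bundles $F_1,F_2$.
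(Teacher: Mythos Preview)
Your proposal is correct and follows essentially the same route as the paper's proof: apply the Adams operations $\psi_k$ to $F_1$, tensor with $F_2$, use (\ref{eq:ch-psi}) to produce the Vandermonde system on the nodes $k=1,\dots,i+j+1$, and then unpack $\psi_k$ as a difference of admissible functors. Your writeup is in fact slightly more explicit than the paper's about the bookkeeping (e.g., the independence of the $G_k^\ell$ from the bundles), but the argument is the same.
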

\begin{proof}
  Take $H_l = \psi_l(F_1) \otimes F_2$.
  Then, there exist admissible functors $G^1_l,G^2_l$ such that $H_l = G_{l}^1(F_1,F_2)-G_{l}^2(F_1,F_2)$.
  Set $r = i+ j$.
  By (\ref{eq:ch-psi}), we have
  \begin{multline*}
    \ch_r(G_{l}^1(F_1,F_2))-\ch_r(G_{l}^2(F_1,F_2))= \ch_r(H_l) \\
    = \sum_{a = 0}^{r} \ch_a(\psi_l(F_1))\ch_{r-a}(F_2)
    = \sum_{a = 0}^{r} \ch_a(F_1) \ch_{r-a}(F_1) l^a.
  \end{multline*}
  Now, by choosing $l\in \aset{1,\cdots,r+1}$, we have an invertible Vandermonde matrix as in the proof of Lemma~\ref{lm:ch}.
Therefore, the set $\aset{G_{1}^1,G_{1}^2,\cdots,G_{r+1}^1,G_{r+1}^2}$ satisfies the requirement in the lemma.
\end{proof}

By Lemma~\ref{lm:alg2}, for any $J\in \cC^{\nu}_N$ and fixing $a_0,K_1$ such that $ a_0\ge 1$, $K_1 \ge 1$ and $a_0 + K_1 \le N$, we can construct finitely many admissible functors $G_{J,a_0,K_1,l}$ such that
\begin{equation}
  \label{eq:ch-nv1}
  \ch_{a_0}(E)\ch_{K_1}(J(E)) = \sum_l \lambda'_{l} \ch_{a_0+ K_1}(G_{J,a_0,K_1,l}(E)).
\end{equation}
Set
\begin{equation*}
  \cC_N^{\nu+1} = \cC_N^{\nu} \bigcup \aset{G_{J,a_0, K_1,l}|{J\in\cC^{\nu}_N, a_0\ge 1, K_1 \ge 1, a_0+K_1 \le N}}.
\end{equation*}
By (\ref{eq:ch-nu}) and (\ref{eq:ch-nv1}), by choosing $J_i\in \cC_N^{\nu+1}$, (\ref{eq:chal}) holds for $\nu+1$.
The proof of Lemma~\ref{lm:alg} is finished.

\section{Two remarks about Theorem~\ref{thm:com}}
\label{sec:so-re}
\subsection{Theorem~\ref{thm:com} for more general manifolds}
Till now, we assume that the manifold $M$ is closed.
Now, we comment briefly on how to extend Theorem~\ref{thm:com} to compact manifolds with boundary or non-compact manifolds.

In~\cite{Cecchini_2021sc,Gromov_1996aa,Su_2021ma}, the authors define the $\rK$-cowaist and $\ahg$-cowaist on compact manifolds with boundary or non-compact manifolds.
The idea behind these two kinds of generalization is the same.
Let us recall the definition in~\cite{Cecchini_2021sc} as an example.

Let $M$ be a compact manifold with boundary.
To take the effect of boundary into consideration, we choose a pair of Hermitian bundles $E,F$, called a \emph{compatible pair}, such that there is an isomorphism between $E$ and $F$ near $\partial M$ which preserves the metric and the connection on $E$ and $F$.\footnote{In~\cite{Cecchini_2021sc}, $E,F$ are called the admissible pair.
  We rename it to avoid the possible ambiguities with the admissible functors used in this paper.
}
To replace the condition (\ref{eq:cn}), we use the condition that there exists a polynomial $p$ of Chern forms such that
\begin{equation}
  \label{eq:cn1}
  \int_M (p(\opc_0(E),\opc_1(E),\cdots) - p(\opc_0(F),\opc_1(F),\cdots)) \neq 0.
\end{equation}
Accordingly, to replace the condition (\ref{eq:ahg}), we use the condition that
\begin{equation}
  \label{eq:ahg1}
  \int_M \ahg(M)(\ch(E) - \ch(F)) \neq 0.
\end{equation}
Then the $\rK$-cowaist (resp.\ $\ahg$-cowaist) of $M$ is the supremum of $\|\opR^{E \oplus F}\|^{-1}$ with respect to all compatible pairs $E,F$ satisfying (\ref{eq:cn1}) (resp.\ (\ref{eq:ahg1})).

For the case that $M$ is non-compact, these two definitions remain valid if we modify the definition of the compatible pair a little.
Namely, in this non-compact case, we require that the isomorphism between $E$ and $F$ is defined outside a compact set of $M$.
Note that for different compatible pairs on $M$, the compact set may vary.

The method to show Theorem~\ref{thm:com} in fact also works for these more general cases.
The key point is that the equality (\ref{eq:ch-psi}), although we treat it as a cohomological equality in Section~\ref{sec:pf-com}, holds at the differential form level.
To check this fact, we can use the explicit construction of $\psi_k(E)$ given in~\cite[\S~4]{Adams_1962aa}.
Then one can use the same arguments to show that Lemma~\ref{lm:alg} and Lemma~\ref{lm:ch} still hold with the vector bundles replaced by the compatible pairs.

\subsection{The reverse direction inequality of (\ref{eq:klea})}
As another remark for Theorem~\ref{thm:com}, we would to like to point out that an inequality like (\ref{eq:klea}) in the reverse direction does not hold in general.

Let $N$ be a 4-dimensional simply connected closed manifold with $\int_N\ahg(N) \neq 0$ (e.g.\ a K$3$ surface) and $\bS^2(R)$ be the standard 2-dimensional sphere with the radius $R$.
Besides, we denote the Hopf bundle over $\bS^2(R)$ by $H$.

In~\cite[\S~4{$\frac{1}{4}$}]{Gromov_1996aa}, Gromov shows that $\kcw(N) < +\infty$.
In fact, by checking the proof of this result, we know that there exists a constant $a$ depending on $N$ such that for any Hermitian vector bundle $L$ over $N$ with $\Vert \opR^L\Vert < a$, $L$ must be a topological trivial bundle (with a possible nontrivial metric and connection).
Furthermore, the same proof implies that for any Hermitian vector bundle $E$ over $N \times \bS^2(R)$ such that $\Vert \opR^E\Vert < a$, $E$ must be topologically isomorphic to a pullback bundle from $\bS^2(R)$, which implies that $E$ cannot satisfy (\ref{eq:cn}).
As a result, we know that
\begin{equation}
  \label{eq:ns1}
  \kcw(N \times \bS^2(R)) \le a^{-1}.
\end{equation}

On the other hand, we denote the pullback bundle of $H$ over $N \times \bS^2$ by $\bar{H}$.
We have
\begin{equation*}
  \int_{N \times \bS^2(R)} \ahg(N \times \bS^2(R)) \ch(\bar{H}) = \int_{N} \ahg{(N)}\int_{\bS^2(R)}\opc_1(H) \ne 0,
\end{equation*}
that is, $\bar{H}$ satisfies (\ref{eq:ahg}).
However, by direct calculation,
\begin{equation*}
  \Vert\opR^{\bar{H}}\Vert = \frac{1}{2R^2}.
\end{equation*}
As a result,
\begin{equation}
  \label{eq:ns2}
  \acw(N \times \bS^2(R)) \ge 2R^2.
\end{equation}
Combining (\ref{eq:ns1}) and (\ref{eq:ns2}), we know that (\ref{eq:klea}) in the reverse direction does not hold in general.

\section{Acknowledgments}
The author would like to thank Prof. Guangxiang Su for helpful discussion about the content of this paper and the anonymous referee for reading the paper carefully and the very inspiring suggestions.

\providecommand{\bysame}{\leavevmode\hbox to3em{\hrulefill}\thinspace}
\providecommand{\MR}{\relax\ifhmode\unskip\space\fi MR }
\providecommand{\MRhref}[2]{\href{http://www.ams.org/mathscinet-getitem?mr=#1}{#2}
}
\providecommand{\href}[2]{#2}

\end{document}